\pgfplotsset{compat=1.14} 
\newtheorem{theorem}{Theorem}[section]
\newtheorem{lemma}[theorem]{Lemma}
\newtheorem{proposition}[theorem]{Proposition}
\newtheorem{corollary}[theorem]{Corollary}
\newtheorem{conjecture}[theorem]{Conjecture}
\theoremstyle{definition}
\newtheorem{definition}[theorem]{Definition}
\theoremstyle{remark}
\newtheorem{remark}[theorem]{Remark}
\numberwithin{equation}{section}
\begin{document}

\title{The Theory of ramification}

\author{Theophilus Agama}
\address{Department of Mathematics, African Institute for Mathematical science, Ghana
}
\email{theophilus@aims.edu.gh/emperordagama@yahoo.com}


\subjclass[2000]{Primary 54C40, 14E20; Secondary 46E25, 20C20}

\date{\today}


\keywords{Ramification, index, centre}

\footnote{
\par
}%

\begin{abstract}
In this paper, we introduce and develop the concept of \emph{ramification} in a given modulus. We study some properties in relation to this concept and it's connection to some important problems in mathematics, particularly the Goldbach conjecture.
\end{abstract}

\maketitle

\section{Introduction}

The elementary - looking definition that we adopt-an integer $n$ \emph{ramifies} in the modulus $m$ when the residue of $n$ modulo $m$ pairs with a residue of $n$ modulo a strictly smaller modulus so that the two residues add to $m$ - provides a simple, concrete language to record interactions of congruences at two different scales. Intuitively, if the image of an object in a mirror of size $m$ can be complemented by the image produced in a smaller mirror $r<m$ so that the concatenation fills the larger mirror, then the object behaves as a ramifier. This combinatorial viewpoint is deliberately elementary, but it also connects directly to deep additive questions and to the study of residues and representations of integers.\\

Framing the binary Goldbach conjecture in the language of ramification is the principal motivation of this work: the conjecture can be restated as the assertion that every even $m\geq 6$ admits a \emph{strong} ramifier, meaning that there exists $n$ whose residues modulo $m$ and some $r<m$ are both prime and sum to $m$. While this reformulation may not make the problem easier, it recasts Goldbach as a problem about compatible residue-pairs and highlights the specific kinds of modular and multiplicative information one would need to settle the conjecture.\\

Two classical approaches have dominated progress on representations of integers as sums of primes: the Hardy--Littlewood circle method and sieve techniques. The circle method, introduced by G. H. Hardy and J. E. Littlewood, gives asymptotic formulas for representations of integers as sums of structured sets (notably primes) and underlies the celebrated Vinogradov-style results on sums of primes. The sieve approach, typified by Chen's theorem, produces partial but powerful results by constructing almost-prime representations. These pillars inform the background intuition behind our counting and density arguments: our elementary upper and lower bounds (see §3) show precisely where analytic exponential-sum inputs or sieve lower bounds would be inserted to strengthen conclusions about strong ramifiers.\\

What is new here is the systematic introduction of ramifiers and the related vocabulary (index of ramification, circle of ramification, ramification character). Rather than attempting a direct analytic assault on Goldbach, we develop a modular-combinatorial framework that:

\begin{itemize}
  \item isolates structural constraints on how residues at different moduli can pair to form a fixed modulus $m$,
  \bigskip
  
  \item yields elementary existence results via descending and congruence arguments, and
  \bigskip
  
  \item produces quantitative upper and lower bounds for the counting function 
  $$
  \#\{n\leq x:\mathcal R(m)=n\}
  $$ 
  in terms of $x$ and $m$.
\end{itemize}
\bigskip

These elementary estimates (Theorems \ref{sup} and \ref{inf} in the paper) both serve as evidence that the ramification framework is non-vacuous and as a clear bookkeeping device showing which analytic or sieve improvements would be necessary to obtain results on \emph{strong} ramification.\\

Informally, the main counting idea is simple: an integer $n$ has residues $a_1 \pmod m$ and $a_2 \pmod r$; when $a_1+a_2=m$, we call $n$ a ramifier for $m$. Controlling the number of such $n$ therefore reduces to controlling congruence systems that force $n$ to simultaneously lie in certain arithmetic progressions. This combinatorial viewpoint makes elementary obstructions apparent (for example Proposition \ref{ram} shows that ramifiers cannot be congruent to $0 \pmod m$ and suggests natural places where multiplicative-order information or nontrivial exponential-sum bounds can sharpen naive density estimates (see Theorem \ref{quadratic} and its use in improving upper bounds).\\

The ramification character $\kappa_m(n)$ introduced in §6 is an indicator function analogous to objects studied routinely in additive number theory and sieve theory; partial-sum estimates for $\kappa_m$ therefore echo classical counting problems treated in standard references. While we do not attempt a full analytic attack on strong ramification here, the language developed makes it straightforward to plug in stronger inputs (for instance bounds for character sums, exponential sums, or sieve lower bounds) to obtain improved asymptotics for the ramifier counts.

\subsection{Organization of the paper}

For the convenience of the reader, we give a short roadmap of the sections that follow:
\begin{itemize}
  \item \textbf{\S2.} Elementary properties and basic existence results for ramifiers.
  \bigskip
  
  \item \textbf{\S3.} Upper and lower counting bounds for 
  $$
  \#\{n\leq x:\mathcal R(m)=n\}
  $$ 
  and related density remarks.
  \bigskip
  
  \item \textbf{\S4.} The notion of \emph{strong} ramification and the reformulation of the binary Goldbach conjecture in this language; discussion of classical partial results that illuminate the gap between current knowledge and the strong-ramifier statement.
  \bigskip
  
  \item \textbf{\S5.} The index of ramification, structural constraints and magnification-type phenomena (Theorem \ref{magnification}).
  \bigskip
  
  \item \textbf{\S6.} The circle of ramification, the ramification character $\kappa_m$, and multiplicative-type properties with partial-sum estimates.
  \bigskip
  
  \item \textbf{\S7.} Final remarks and directions for further research, including where analytic or sieve-theoretic inputs would strengthen the conclusions about strong ramifiers.
\end{itemize}

\subsection*{Acknowledgments and background references}

The reader who wishes to place this work in the classical literature could consult the foundational Hardy--Littlewood papers on the circle method, the Vinogradov treatment of sums of primes, and standard modern texts on sieve methods and multiplicative number theory \cite{tenenbaum2015introduction, hildebrand2005introduction, hardy1923some, inogradov1947method, chen2002representation}. In particular, we use the language of indicator functions and partial-sum estimates that appears in standard references on analytic number theory and sieve methods.
\bigskip

\section{The concept of ramification}

\begin{definition}\label{concept}
Let $n\geq 2$ be an integer and $n\equiv a_1\pmod m$. The integer $n$ is said to ramify in $\pmod m$ if there exist some $r<m$ with $n\equiv a_2 \pmod r$ so that $a_1+a_2=m$. We say that the modulus $m$  admits a ramifier and we denote the ramifier by $\mathcal{R}(m)=n$.
\end{definition}

\begin{remark}
Definition \ref{concept} has a practical implication. The concept affirms the notion that, given the image of an object on a mirror of a certain size, If we can find a mirror of a relatively smaller size that produce an image of the same body so that the concatenation of the two covers the size of the  larger mirror, then the body must indeed be a ramifier. Next, we examine some properties of the ramifier in a given modulus.
\end{remark}

\section{Properties of the ramifier}

In this section, we study some properties of the ramifier in a fixed modulus. We also count the number of ramifiers in all modulus. We first give a proof that indicates that there must exist a ramifier in any given modulus. The method of proof employs in an ingenious way an infinite descending argument whose consequence is not suitable for that particular regime.

\begin{proposition}
There exist a ramifier in a fixed modulus. In particular, for any $m\geq 2$, there exists a ramifier in $\pmod t$ for a fixed $1<t\leq m$.
\end{proposition}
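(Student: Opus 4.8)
The plan is to prove the statement by an explicit construction rather than by descent: for each fixed modulus $\mu \geq 3$ I will exhibit a single integer $n$ that ramifies in $\pmod{\mu}$, which simultaneously settles the general assertion and the ``in particular'' clause. Throughout I read the residues $a_1, a_2$ of Definition \ref{concept} as the least non-negative ones, so that $0 \le a_1 < \mu$, $0 \le a_2 < r$, and the constraint $a_1 + a_2 = \mu$ forces $a_1 \ge 2$ and $1 \le a_2 \le r-1$.

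First I would fix the target residues. The simplest split of $\mu$ compatible with the inequalities above is $a_1 = \mu - 1$ and $a_2 = 1$, for which the auxiliary modulus may be taken as small as $r = 2$ (legitimate since $\mu \ge 3$ yields $r = 2 < \mu$). It then remains to produce an integer $n \ge 2$ satisfying $n \equiv \mu - 1 \pmod{\mu}$ and $n \equiv 1 \pmod{2}$ simultaneously.

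Next I would verify that $n = 2\mu - 1$ is such a witness. Indeed $2\mu - 1 = \mu + (\mu - 1)$, so $n \equiv \mu - 1 \pmod{\mu}$ and the least residue $a_1$ equals $\mu - 1$; and since $2\mu - 1$ is odd, its least residue modulo $r = 2$ is $a_2 = 1$. Hence $a_1 + a_2 = (\mu - 1) + 1 = \mu$, so $n = 2\mu - 1$ ramifies in $\pmod{\mu}$ and we may set $\mathcal{R}(\mu) = 2\mu - 1$. Because this argument applies verbatim to every integer $\mu$ with $3 \le \mu \le m$, the ``in particular'' clause follows for all admissible $t$ in the stated range at once.

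The one genuinely delicate point is the interface between the two residue conventions: I must ensure the chosen value $a_2 = \mu - a_1$ is a legitimate least residue for the smaller modulus, i.e. that $0 \le \mu - a_1 < r$, and that the two congruences on $n$ are simultaneously solvable. Since $\mu$ and $r = 2$ need not be coprime, the Chinese Remainder Theorem does not apply in its bare form, so I verify solvability directly through the parity of $2\mu - 1$. I expect this compatibility check, together with isolating the degenerate case $\mu = 2$ --- where the only available auxiliary modulus is $r = 1$, forcing $a_2 = 0$ and making $a_1 + a_2 = \mu$ impossible --- to be the main obstacle; the infinite descent alluded to before the statement appears unnecessary once the explicit witness is in hand.
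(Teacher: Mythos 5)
Your proof is correct for every modulus $\mu\ge 3$ and takes a genuinely different, and considerably more transparent, route than the paper. The paper argues by contradiction: assuming the moduli in $(1,m]$ admit no ramifier, it tries to manufacture an infinite strictly descending sequence of positive moduli and concludes from the impossibility of such a descent. You instead exhibit the explicit witness $n=2\mu-1$, whose least residues $\mu-1 \pmod{\mu}$ and $1 \pmod{2}$ sum to $\mu$; this settles existence in one line, avoids the descent's delicate (and, as written, rather loosely justified) bookkeeping of residue sequences, and as a bonus shows that every odd $n\equiv -1\pmod{\mu}$ is a ramifier, foreshadowing Theorem \ref{inf}. Your isolation of the case $\mu=2$ is a genuine catch rather than pedantry: under the least-non-negative-residue reading of Definition \ref{concept}, the only admissible auxiliary modulus is $r=1$, which forces $a_2=0$ while $a_1\le 1$, so $a_1+a_2=2$ is unattainable and the modulus $2$ admits no ramifier at all. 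The proposition as stated therefore fails at $t=2$, a boundary case the paper's descent argument passes over silently (its descent has no base case and never verifies where the chain of moduli must terminate). In short, your construction proves everything the proposition can truthfully claim, and the one point where it diverges from the stated result is a defect of the statement, not of your argument.
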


\begin{proof}
Suppose on the contrary that for all $m\geq 2$, then the modulus do not admit a ramifier for all  $1<t\leq m$. Then it follows by definition \ref{concept} that there exist some sequence of positive integers $2=s_1<s_2<\ldots <s_k=m$ such that for all $m$ with $n\equiv a_1\pmod m$ 
\begin{align}
m\neq a_1+r_i\nonumber
\end{align}
where $n\pmod {s_i}=r_i$ for $i=1,\ldots k-1$. Again there exist some $1<r_j\leq r_{k-1}$ such that $a_1+r_j<m$ if and only if $r_j<m-a_1<m$. Now choose $t_k=m-a_1<m$, then by assumption it follows that for $n\equiv a_2\pmod {t_k}$ so that there exist a sequence of positive integers $t_k>v_{k-1}>v_{k-2}>\cdots v_1>1$ such that  $a_2+u_i\neq t_k$ for all $i=1,2 \ldots k-1$, where $n\pmod {v_i}=u_i$. It follows that there exist some $1<u_j\leq u_{k-1}$ so that  $a_2+u_j<t_k$ if and only if $u_j<t_k-a_2<t_k$. By choosing $t_k-a_2=t_{k-1}<t_k<m$ and using the fact that each $1<t\leq m$ admits no ramifier, we obtain by induction an infinite descending sequence of positive integers
\begin{align}
m>t_{k}>t_{k-1}>t_{k-2}>\cdots >t_{k-i}>\cdots. \nonumber
\end{align}
This proves the proposition.
\end{proof}

\begin{remark}
The next result highlights a sufficient condition for any positive integer to ramify in a given modulus.
\end{remark}

\begin{proposition}\label{ram}
Let $m\geq 2$. If $\mathcal{R}(m)=n$ then $\mathcal{R}(m)\not \equiv 0\pmod m$. 
\end{proposition}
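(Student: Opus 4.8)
The plan is to argue by contradiction, exploiting the size constraint that is built into the residue $a_2$ appearing in Definition \ref{concept}. Suppose, contrary to the claim, that $\mathcal{R}(m)=n$ yet $n\equiv 0\pmod m$. Writing $n\equiv a_1\pmod m$ with $0\le a_1<m$ as in the definition, the hypothesis $n\equiv 0\pmod m$ forces $a_1=0$.

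Next I would invoke the ramification hypothesis directly. Since $\mathcal{R}(m)=n$, Definition \ref{concept} supplies some $r<m$ together with a residue $a_2$ satisfying $n\equiv a_2\pmod r$ and the defining relation $a_1+a_2=m$. Substituting $a_1=0$ yields $a_2=m$.

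The contradiction then comes from the standard range of a residue. Because $a_2$ is the residue of $n$ modulo $r$, we have $0\le a_2<r$, and since the definition also requires $r<m$, these combine to give $a_2<r<m$, hence $a_2<m$. This is incompatible with $a_2=m$ obtained above, so the assumption $n\equiv 0\pmod m$ is untenable and we conclude $\mathcal{R}(m)\not\equiv 0\pmod m$.

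I do not anticipate a genuine obstacle here, as the argument is essentially a one-line consequence of the inequality $a_2<r<m$ forced by the residue convention; the only point requiring care is to fix that convention explicitly (namely $0\le a_2<r$), so that the step $a_2=m$ can be ruled out cleanly. The entire weight of the proposition rests on the fact that the smaller modulus $r$ is taken strictly less than $m$, which caps $a_2$ strictly below $m$ and thereby prevents the sum $a_1+a_2=m$ from being realized with $a_1=0$.
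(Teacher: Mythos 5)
Your proof is correct and follows essentially the same route as the paper's: both arguments reduce to the observation that when $a_1=0$ the witnessing residue modulo any $r<m$ is strictly less than $r<m$, so the sum $a_1+a_2$ can never reach $m$. Your version is slightly cleaner in that it makes the residue convention $0\le a_2<r$ explicit, which is the only point on which the paper's proof is implicit.
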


\begin{proof}
Let $m\geq 2$ and let $\mathcal{R}(m)=n$. Suppose on the contrary that $\mathcal{R}(m)\equiv 0\pmod m$, then it follows that for the sequence $m=r_{k}>r_{k-1}>\ldots >r_{1}>1$, where $\mathcal{R}(m)\pmod {r_i}=s_i$ with $i=1,2,\ldots k-1$, it must be that $s_i+0<m$. This contradicts the fact that $m$ admits a ramifier. This completes the proof of the proposition.
\end{proof}
\bigskip

Proposition \ref{ram}, although simple, is somewhat revealing. It allows us to control at the very least the number of ramifiers for a finite set of integers in a given modulus. That is to say, for any set of the form 
$$
\{n\leq x:\mathcal{R}(m)=n\}
$$
then 
\begin{align}
\# \{n\leq x:\mathcal{R}(m)=n\}&=\sum \limits_{\substack{n\leq x\\\mathcal{R}(m)=n}}1\nonumber \\&\leq x-\bigg\lfloor \frac{x}{m}\bigg\rfloor \nonumber \\&=\bigg(1-\frac{1}{m}\bigg)x+O(1).\nonumber
\end{align}
It follows from this upper bound that the distribution of ramifiers in any finite set of the integers depends greatly on the modulus of ramification. It is clear that the smaller the modulus, the less chance there is to find a ramifier in the set. Conversely, the larger the modulus the high chance there is in picking a ramifier in the set in any random selection. This upper bound, although very weak, could serve as a benchmark. Applying Proposition \ref{ram} indicates that we can do better than this if we knew other subtle properties of the ramifiers in any finite set of the integers. The sequel will be focused on studying such properties.

\begin{theorem}\label{quadratic}
Let $p$ be a prime and let $(a,p)=1$. If $a$ is a quadratic residue modulo $p$,  then the set 
$$
\mathcal{M}:=\{a, a^2, \ldots, a^{p-1}\}
$$ 
contains at least two non-ramifiers modulo $p$.
\end{theorem}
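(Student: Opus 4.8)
The plan is to isolate a single residue condition that forces an integer to be a non-ramifier modulo $p$, and then to locate two elements of $\mathcal{M}$ satisfying it. The key observation I would record first is that \emph{any integer $n$ with $n \equiv 1 \pmod p$ is a non-ramifier modulo $p$}. Indeed, writing $a_1 = 1$ for its residue modulo $p$, ramification would demand some $r$ with $1 < r < p$ and $n \equiv a_2 \pmod r$ satisfying $a_1 + a_2 = p$, that is, $a_2 = p - 1$. But $a_2 = n \bmod r \le r - 1 \le p - 2$, since $r \le p - 1$; hence the value $a_2 = p - 1$ can never be attained, no admissible $r$ exists, and $n$ fails to ramify. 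This is the structural heart of the argument: the residue $p-1$ is simply too large to arise as $n \bmod r$ for any $r < p$, so the class of $1$ modulo $p$ is automatically inert.

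Next I would produce two members of $\mathcal{M}$ lying in this inert class. Since $(a,p)=1$, Fermat's little theorem gives $a^{p-1} \equiv 1 \pmod p$. Since $a$ is moreover a quadratic residue, Euler's criterion gives $a^{(p-1)/2} \equiv 1 \pmod p$. Both exponents lie in $\{1,2,\ldots,p-1\}$, using that $p$ is an odd prime so that $(p-1)/2$ is a positive integer, and they are distinct because $(p-1)/2 = p-1$ would force $p = 1$. Thus $a^{(p-1)/2}$ and $a^{p-1}$ are two distinct elements of $\mathcal{M}$, each congruent to $1$ modulo $p$. Combining with the first step, both are non-ramifiers modulo $p$, which exhibits the desired pair and proves the theorem.

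I do not anticipate a genuine obstacle, since the difficulty here is conceptual rather than computational: once one notices that the class of $1$ is inert, Euler's criterion hands over the two witnesses immediately. The only points needing care are the elementary bookkeeping that $r \le p-1$ forces $a_2 \le p-2 < p-1$, and the degenerate cases where $\mathcal{M}$ has fewer than two distinct elements, namely $p = 2$ (where the list is a single term) and $a = 1$ (where all powers collapse to $1$); these should be flagged as excluded or interpreted via the indexed sequence $a^1,\ldots,a^{p-1}$, but they do not affect the substance of the argument.
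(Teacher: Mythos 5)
Your proof is correct and follows essentially the same route as the paper: Fermat's little theorem and Euler's criterion produce the two witnesses $a^{p-1}$ and $a^{(p-1)/2}$, both congruent to $1$ modulo $p$. In fact your write-up is more complete than the paper's, which merely asserts that $\mathcal{R}(p)\neq a^{p-1}$ ``immediately''; your observation that a residue of $1$ modulo $p$ would force $a_2=p-1$ while $a_2=n \bmod r\leq r-1\leq p-2$ for every admissible $r<p$ supplies exactly the justification the paper omits (and your flagging of the degenerate cases $p=2$ and $a=1$ is a further point in your favor).
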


\begin{proof}
Let $p$ be a prime and $(a,p)=1$. It follows that $a^{p-1}\equiv 1 \pmod p$. It follows immediately that $\mathcal{R}(p) \neq a^{p-1}$. If we assume that $a$ is a quadratic residue modulo $p$, then it follows that 
\begin{align}
a^{\frac{p-1}{2}}\equiv 1\pmod p\nonumber
\end{align}
and it follows that $\mathcal{R}(p)\neq a^{\frac{p-1}{2}}$, thereby ending the proof.
\end{proof}

\begin{remark}
In light of Theorem \ref{quadratic}, we can improve on the upper bound in the preceding discussion concerning the scale of ramifiers in a given modulus.
\end{remark}

\begin{theorem}\label{sup}
Let $m$ be fixed and let  
$$
\mathcal{I}:=\{n\leq x:\mathcal{R}(m)=n\}.
$$
We have
\begin{align}
\# \mathcal{I}\leq \bigg(1-\frac{1}{m}\bigg)x-\frac{\log x}{\log m}+O(1).\nonumber
\end{align}
\end{theorem}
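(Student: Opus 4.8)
The plan is to start from the weak bound
$\#\mathcal{I}\le x-\lfloor x/m\rfloor=(1-\tfrac1m)x+O(1)$
already recorded after Proposition \ref{ram}, and to sharpen it by exhibiting a second, logarithmically large family of guaranteed non-ramifiers that is disjoint from the multiples of $m$. The weak bound discards only the residue class $0\pmod m$, so the additional saving of $\frac{\log x}{\log m}$ must come entirely from non-ramifiers lying in a nonzero residue class; the task is to find such a family whose counting function is exactly $\log_m x$.

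First I would isolate and reuse the mechanism behind Theorem \ref{quadratic}. I claim that if $n\equiv 1\pmod m$ then $n$ cannot ramify: writing $a_1=1$, ramification would require some $r<m$ with $n\equiv a_2\pmod r$ and $a_1+a_2=m$, i.e. $a_2=m-1$; but $a_2\le r-1\le m-2$, a contradiction. Thus the whole class $1\pmod m$ consists of non-ramifiers — this is precisely the phenomenon exploited for $a^{p-1}\equiv 1$ and $a^{(p-1)/2}\equiv 1$ in Theorem \ref{quadratic}, now extracted in a form valid for arbitrary modulus $m$.

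Next I would pick a concrete subfamily of this class that is visibly disjoint from the multiples of $m$ and whose counting function is $\frac{\log x}{\log m}$, namely $n_k:=m^k+1$ for $k=1,2,\ldots$. Each $n_k\equiv 1\pmod m$, hence is a non-ramifier by the previous step, and each satisfies $(n_k,m)=1$, so none is a multiple of $m$. The constraint $n_k\le x$ reads $m^k\le x-1$, i.e. $k\le\log_m(x-1)$, giving exactly $\lfloor\log_m(x-1)\rfloor$ admissible values of $k$, and hence that many non-ramifiers.

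Finally I would assemble the count: the $\lfloor x/m\rfloor$ multiples of $m$ and the $\lfloor\log_m(x-1)\rfloor$ integers $n_k$ form two disjoint sets of non-ramifiers not exceeding $x$, so $\#\mathcal{I}\le x-\lfloor x/m\rfloor-\lfloor\log_m(x-1)\rfloor+O(1)$, and replacing the floors by $\tfrac{x}{m}+O(1)$ and $\tfrac{\log x}{\log m}+O(1)$ yields the stated inequality. The only delicate point — and the main obstacle — is the bookkeeping: one must verify that the two excluded families genuinely do not overlap and that all floor and boundary corrections are absorbed into the $O(1)$. The choice of base $m$ in the tower $m^k+1$ (rather than, say, $(m+1)^k$) is exactly what forces the denominator $\log m$ and produces the sharp $\frac{\log x}{\log m}$ rather than a weaker $\frac{\log x}{\log(m+1)}$.
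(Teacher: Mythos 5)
Your proof is correct and follows the same overall strategy as the paper --- start from the trivial bound $(1-\frac{1}{m})x+O(1)$ obtained by discarding the class $0\pmod m$, then subtract a further family of roughly $\frac{\log x}{\log m}$ guaranteed non-ramifiers lying in the class $1\pmod m$ --- but your execution differs in a way that matters. The paper subtracts the count of powers $a^k\le x$ with $a^k\equiv 1\pmod m$ and then ``takes $a=m+1$'', which counts the integers $(m+1)^k\le x$ and therefore excludes only $\lfloor\log x/\log(m+1)\rfloor$ of them; since $\log(m+1)>\log m$, the paper's own argument establishes the weaker bound $(1-\frac{1}{m})x-\frac{\log x}{\log(m+1)}+O(1)$ and never actually reaches the stated denominator $\log m$. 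Your family $n_k=m^k+1$ repairs exactly this defect: it still lies in the class $1\pmod m$, so your lemma that this class contains no ramifiers applies (and that lemma is itself the correct general-modulus extraction of the mechanism behind Theorem \ref{quadratic}, which as stated covers only prime moduli, so you also close a second small gap in the paper's citation); it is disjoint from the multiples of $m$; and its counting function is $\lfloor\log_m(x-1)\rfloor=\frac{\log x}{\log m}+O(1)$, which is precisely the saving the theorem claims. In short, your argument is not merely a variant of the paper's: it is the version that actually proves the statement as written, and your closing remark about the choice of base is exactly the point the paper's proof misses.
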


\begin{proof}
In the preceding discussion, the number of ramifiers that led to the upper bound are integers $n\leq x$ satisfying $n\equiv 0\pmod m$. Let 
$$
\mathcal{I}:=\{n\leq x:\mathcal{R}(m)=n\}
$$ 
be the set of ramifiers in modulo $m$. By Theorem \ref{quadratic}, we find that the upper bound be can slightly improved to \begin{align}
\# \mathcal{I} &\leq \bigg(1-\frac{1}{m}\bigg)x-\sum \limits_{\substack{a\leq x\\ a^{k}\leq x\\a^{k}\equiv 1\pmod m\\(a,m)=1}}1+O(1)\nonumber \\&=\bigg(1-\frac{1}{m}\bigg)x-\sum \limits_{\substack{a\leq x\\(a,m)=1}}\sum \limits_{\substack{a^{k}\equiv 1 \pmod m\\1\leq k \leq \lfloor \frac{\log x}{\log a}\rfloor}}1+O(1)\nonumber
\end{align}
and the result follows by taking $a=m+1$ in the sum.
\end{proof}

\begin{remark}
In connection with the binary Goldbach conjecture, we launch a very strict form of the notion of Ramifiers. The Goldbach conjecture can be formulated in this language. It comes in the following sequel.
\end{remark}

\begin{definition}
Let $n\geq 2$ be an integer and $n\equiv p_1\pmod m$. The integer $n$ is said to ramify \emph{strongly} in $\pmod m$ if there exist some $r<m$ such that $n\equiv p_2 \pmod r$, such that $p_1+p_2=m$ where $p_1,p_2$ are all prime. 	In other words, we say that the modulus $m$ admits a strong ramifier.
\end{definition}

\begin{conjecture}[Goldbach]
Every even number $n\geq 6$ admits a strong ramifier in $\pmod n$.
\end{conjecture}

\begin{theorem}\label{inf}
There are infinitely many ramifiers in $\pmod m$ for some fixed $m$.
\end{theorem}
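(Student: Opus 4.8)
The plan is to reduce the statement to the existence result already established in the first Proposition, combined with the elementary observation that whether $n$ ramifies in $\pmod m$ depends only on a bounded amount of residue information about $n$, and is therefore periodic in $n$. Concretely, I would fix a modulus $m\geq 2$ and produce a single ramifier, then translate it along an arithmetic progression that preserves all the relevant residues.

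First I would invoke the first Proposition to fix an integer $n_0\geq 2$ that ramifies in $\pmod m$. By Definition \ref{concept} this gives $n_0\equiv a_1\pmod m$ together with a witnessing modulus $r$ with $1<r<m$ and $n_0\equiv a_2\pmod r$ satisfying $a_1+a_2=m$. I would keep this witnessing pair $(r,a_2)$ fixed throughout. Next I set $L=\operatorname{lcm}(m,r)$ and consider the progression $n_k=n_0+kL$ for $k=0,1,2,\ldots$. Since $m\mid L$ we have $n_k\equiv n_0\equiv a_1\pmod m$, and since $r\mid L$ we have $n_k\equiv n_0\equiv a_2\pmod r$. Hence for every $k$ the same modulus $r<m$ witnesses that $n_k$ ramifies in $\pmod m$, because the defining identity $a_1+a_2=m$ is left unchanged. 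The integers $n_k$ are distinct and strictly increasing, so $\{n_k:k\geq 0\}$ is an infinite family of ramifiers, and since the argument applies to the $m$ chosen at the outset the theorem follows.

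The hard part will be essentially bookkeeping rather than any deep difficulty: the one thing that genuinely matters is that \emph{both} residues $n\bmod m$ and $n\bmod r$ be preserved simultaneously, which is exactly why one must translate by the common period $\operatorname{lcm}(m,r)$ rather than by $m$ or $r$ alone, and why the witness $r$ and the target sum $m$ must be held fixed along the progression so that the definition of ramification is verified literally, term by term. I also note that the same reasoning in fact yields infinitely many ramifiers for \emph{every} fixed $m\geq 2$, so the stated ``for some fixed $m$'' is obtained a fortiori.
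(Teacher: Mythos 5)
Your proof is correct, but it proceeds quite differently from the paper's. The paper proves the theorem by a direct counting argument: it rewrites $\#\{n\leq x:\mathcal{R}(m)=n\}$ as a sum over the congruence conditions $n\equiv a_0\pmod m$, $n\equiv b_0\pmod{r_0}$ with $a_0+b_0=m$, manipulates this into a divisor-counting sum, and extracts the quantitative lower bound $\frac{x^2-xm}{m^2}+O_m(1)$, from which infinitude follows by letting $x\to\infty$; that explicit bound is then reused later in the paper (in the threshold discussion on which moduli admit ramifiers, in Proposition \ref{circ}, and in the $\frac{x\log x}{2}$ estimate). You instead take a single ramifier guaranteed by the first Proposition and observe that ramification of $n$ in $\pmod m$ depends only on the pair of residues $n\bmod m$ and $n\bmod r$ for the witnessing modulus $r$, so translating by $\operatorname{lcm}(m,r)$ preserves the witness verbatim. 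This is more elementary and arguably more robust: it yields roughly $x/\operatorname{lcm}(m,r)$ ramifiers up to $x$, a bound that is trivially consistent with the fact that there are only $x$ integers below $x$, whereas the paper's main term $x^2/m^2$ exceeds $x$ for fixed $m$ and large $x$ and so cannot be taken at face value as a count. What your route does not supply is the specific quantitative expression the paper leans on in subsequent sections. Two small points of care: the theorem only asserts the conclusion for \emph{some} $m$, so your reliance on the first Proposition (whose statement guarantees a ramifier in $\pmod t$ for some $1<t\leq m$ rather than unambiguously for $m$ itself) is harmless here, though your closing claim that the argument covers \emph{every} fixed $m\geq 2$ inherits whatever ambiguity that Proposition carries; and you correctly identify that one must translate by the common period $\operatorname{lcm}(m,r)$, which is the one place the argument could silently fail if one translated by $m$ alone.
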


\begin{proof}
It suffices to obtain a lower bound for the quantity 
$$
\#\{n\leq x:\mathcal{R}(m)=n\}.
$$ 
It follows that 
\begin{align}
\# \{n\leq x:\mathcal{R}(m)=n\}&=\sum \limits_{\substack{n\leq x\\\mathcal{R}(m)=n}}1\nonumber \\&=\sum \limits_{\substack{n\leq x\\a_0+b_0=m\\n\equiv a_0\pmod m\\n\equiv b_0\pmod {r_0}\\r_0<m}}1\nonumber \\&=\sum \limits_{\substack{n\leq x\\a_0+b_0=m\\mr_0|(n-a_0)(n-b_0)\\r_0<m}}1 \nonumber \\&=\sum \limits_{\substack{n\leq x\\a_0+b_0=m\\r_0<m}}\sum \limits_{mr_0|(n-a_0)(n-b_0)}1\nonumber \\&=\sum \limits_{\substack{a_0+b_0=m\\r_0<m}}\left \lfloor \frac{(x-a_0)(x-b_0)}{mr_0}\right \rfloor \nonumber \\&=\sum \limits_{\substack{a_0+b_0=m\\r_0<m}}\frac{x^2-x(a_0+b_0)+a_0b_0}{mr_0}+O_m(1)\nonumber \\& \geq \frac{x^2-xm}{m^2}+O_m(1)\nonumber
\end{align}
and the result follows immediately from this estimate.
\end{proof}
\bigskip

The above lower bound for the number of ramifiers in a fixed modulus is somewhat instructive. It puts a threshold on the size of the modulus that cannot admit a ramifier from a finite set of the integers $n\leq x$. Indeed, for this lower bound to fail, the inequality must be satisfied 
\begin{align}
\frac{x^2-xm}{m^2}+O_m(1)>x\bigg(1-\frac{1}{m}\bigg)-\frac{\log x}{\log m}+O(1).\nonumber
\end{align}
Using the main term, it follows that 
\begin{align}
m<\frac{x}{\sqrt{x-\log x}}.\nonumber
\end{align}
Thus, the modulus for which the lower bound majorizes the upper bound for the number of ramifiers in a finite set gives the largest scale of a modulus that do not admit a ramifier. It follows that the size of any modulus that admits a ramifier in any finite set of the integers $n\leq x$ must  satisfy the inequality 
\begin{align}
m\geq \left \lfloor \frac{x}{\sqrt{x-\log x}}\right \rfloor +1.\nonumber
\end{align}

\begin{remark}
Next, we prove a result that suggests that there are some integers  $n\leq x$ that ramifies in more than one modulus $m<x$. We find the following elementary estimate useful: 
\end{remark}

\begin{lemma}\label{lem 1}
We have
\begin{align}
\sum \limits_{n=1}^{\infty}\frac{1}{n^2}=\frac{\pi^2}{6}.\nonumber
\end{align}
\end{lemma}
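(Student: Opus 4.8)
The plan is to establish this classical identity (the Basel problem) via Fourier analysis, which I find to be the cleanest rigorous route. First I would fix the function $f(x) = x^2$ on the interval $[-\pi, \pi]$ and extend it periodically, then compute its Fourier cosine coefficients (the sine coefficients vanish by evenness). A routine integration by parts yields $a_0 = \frac{2\pi^2}{3}$ and $a_n = \frac{4(-1)^n}{n^2}$ for $n \geq 1$, so that the Fourier series reads
\[
x^2 \sim \frac{\pi^2}{3} + 4\sum_{n=1}^{\infty} \frac{(-1)^n}{n^2}\cos(nx).
\]

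Next I would invoke a convergence theorem to pass from the formal series to a genuine equality at a chosen point. Since $f$ is continuous and piecewise $C^1$ on $[-\pi,\pi]$ with $f(-\pi)=f(\pi)$, the periodic extension is continuous, and the Dirichlet--Jordan criterion guarantees that the series converges pointwise to $f$ everywhere. Evaluating at $x = \pi$, where $\cos(n\pi) = (-1)^n$, gives $\pi^2 = \frac{\pi^2}{3} + 4\sum_{n=1}^{\infty} \frac{1}{n^2}$, and solving yields $\sum_{n=1}^{\infty} n^{-2} = \frac{\pi^2}{6}$, as claimed.

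The main obstacle is the rigorous justification of this convergence step: one must verify that the Fourier series of the periodic extension actually converges to the value $f(\pi) = \pi^2$ at the endpoint, rather than to the average of the one-sided limits (here the two coincide precisely because the extension is continuous at $\pi$). An alternative that sidesteps pointwise convergence entirely is to apply Parseval's identity to the simpler odd function $g(x) = x$, whose only subtlety is the $L^2$-completeness of the trigonometric system, a standard fact. A third, more elementary but more delicate route follows Euler by comparing the coefficient of $x^3$ in the Taylor expansion of $\sin(\pi x)$ with that of its Weierstrass product $\pi x \prod_{n\geq 1}\bigl(1 - \frac{x^2}{n^2}\bigr)$; in that approach the hard part migrates to justifying the term-by-term expansion of the infinite product.
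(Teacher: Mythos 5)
Your argument is correct and complete, but it differs from the paper in kind rather than in method: the paper does not prove this lemma at all, it simply cites Hildebrand's lecture notes for the Basel identity, whereas you supply a self-contained proof. Your computation checks out: for $f(x)=x^2$ on $[-\pi,\pi]$ one indeed gets $a_0=\tfrac{2\pi^2}{3}$ and $a_n=\tfrac{4(-1)^n}{n^2}$, the periodic extension is continuous and piecewise $C^1$ (since $f(-\pi)=f(\pi)$), so pointwise (in fact uniform) convergence holds everywhere, and evaluating at $x=\pi$ gives $\pi^2=\tfrac{\pi^2}{3}+4\sum_{n\ge 1}n^{-2}$, hence the claim. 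You are also right to flag the endpoint convergence as the only delicate step and to observe that it is harmless here precisely because the extension has no jump at $\pi$; the Parseval route via $g(x)=x$ is an equally clean alternative that avoids pointwise convergence altogether. What the paper's citation buys is brevity for a classical fact that plays only an auxiliary role in its Theorem on $\sum_{m\le x}\sum_{n\le x,\,\mathcal{R}(m)=n}1$; what your version buys is a proof that could stand on its own if the reference were unavailable. Either is acceptable for a lemma of this standard character.
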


\begin{proof}
For a proof see, for example, \cite{hildebrand2005introduction}.
\end{proof}

\begin{lemma}\label{lem 2}
We have
\begin{align}
\sum \limits_{n\leq x}\frac{1}{n}=\log x+\gamma +O\bigg(\frac{1}{x}\bigg).\nonumber
\end{align}
\end{lemma}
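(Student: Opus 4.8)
The plan is to establish the asymptotic by partial summation, comparing the discrete sum against the integral $\int_1^x t^{-1}\,dt=\log x$. First I would set $A(t):=\sum_{n\leq t}1=\lfloor t\rfloor$ and apply Abel's summation formula to the pair $a_n=1$ and the smooth weight $f(t)=1/t$. Since $f'(t)=-1/t^2$, this yields
\begin{align}
\sum \limits_{n\leq x}\frac{1}{n}=\frac{\lfloor x\rfloor}{x}+\int_1^x \frac{\lfloor t\rfloor}{t^2}\,dt.\nonumber
\end{align}
The strategy is then to isolate the smooth part $t$ of $\lfloor t\rfloor$, which produces the logarithm, and to show that the remaining fractional contribution converges to a constant up to an error of order $1/x$.

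Next I would write $\lfloor t\rfloor=t-\{t\}$, where $\{t\}$ denotes the fractional part, and split the integral as
\begin{align}
\int_1^x \frac{\lfloor t\rfloor}{t^2}\,dt=\int_1^x \frac{dt}{t}-\int_1^x \frac{\{t\}}{t^2}\,dt=\log x-\int_1^x \frac{\{t\}}{t^2}\,dt.\nonumber
\end{align}
Because $0\leq \{t\}<1$, the integrand $\{t\}/t^2$ is dominated by $1/t^2$, which by Lemma \ref{lem 1} is integrable on $[1,\infty)$; hence $\int_1^\infty \{t\}t^{-2}\,dt$ converges to some finite constant. Writing $\int_1^x=\int_1^\infty-\int_x^\infty$ and bounding the tail by $\int_x^\infty t^{-2}\,dt=1/x$, I obtain
\begin{align}
\int_1^x \frac{\{t\}}{t^2}\,dt=\int_1^\infty \frac{\{t\}}{t^2}\,dt+O\bigg(\frac{1}{x}\bigg).\nonumber
\end{align}

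Finally I would collect the pieces. The boundary term satisfies $\lfloor x\rfloor/x=1-\{x\}/x=1+O(1/x)$, so combining the three displays gives
\begin{align}
\sum \limits_{n\leq x}\frac{1}{n}=\log x+\bigg(1-\int_1^\infty \frac{\{t\}}{t^2}\,dt\bigg)+O\bigg(\frac{1}{x}\bigg).\nonumber
\end{align}
Setting $\gamma:=1-\int_1^\infty \{t\}t^{-2}\,dt$ yields the stated formula. The only point requiring genuine care, rather than routine estimation, is the identification of this emergent constant with the Euler--Mascheroni constant; I would justify this by letting $x\to\infty$ along the integers, which recovers the classical defining limit $\gamma=\lim_{N\to\infty}\big(\sum_{n\leq N}n^{-1}-\log N\big)$. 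The two $O(1/x)$ contributions—one from the boundary term $\{x\}/x$ and one from the tail integral—must be tracked separately but both are harmless, so I anticipate no real obstacle beyond this bookkeeping.
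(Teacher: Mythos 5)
Your argument is correct and complete. Note that the paper itself offers no proof of this lemma at all---it simply points to Tenenbaum's book---so the only meaningful comparison is that your partial-summation derivation is the standard self-contained route, and supplying it is strictly more informative than the citation. The decomposition $\lfloor t\rfloor = t-\{t\}$, the tail bound $\int_x^\infty \{t\}t^{-2}\,dt \leq 1/x$, the boundary term $\lfloor x\rfloor/x = 1+O(1/x)$, and the identification of $1-\int_1^\infty \{t\}t^{-2}\,dt$ with $\gamma$ by letting $x\to\infty$ along integers are all handled properly. One small quibble: you invoke Lemma \ref{lem 1} (the evaluation $\sum_{n\geq 1} n^{-2}=\pi^2/6$) to justify the integrability of $t^{-2}$ on $[1,\infty)$; that lemma concerns the series, not the integral, and the convergence you need is immediate from the elementary computation $\int_x^\infty t^{-2}\,dt = 1/x$, which you already use for the tail. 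Drop that reference and the proof stands entirely on its own.
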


\begin{proof}
For a proof see, for example, \cite{tenenbaum2015introduction}.
\end{proof}

\begin{theorem}
We have 
\begin{align}
\sum \limits_{m\leq x}\sum \limits_{\substack{n\leq x\\\mathcal{R}(m)=n}}1\geq \frac{x\log x}{2}+O(x).\nonumber
\end{align}
\end{theorem}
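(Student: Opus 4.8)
The plan is to derive the bound by summing the per–modulus lower bound of Theorem \ref{inf} over all moduli $2\le m\le x$ and then evaluating the two resulting elementary sums with Lemmas \ref{lem 1} and \ref{lem 2}. Theorem \ref{inf} supplies, for each fixed $m$, the estimate $\#\{n\le x:\mathcal{R}(m)=n\}\ge \frac{x^2-xm}{m^2}+O_m(1)=\frac{x^2}{m^2}-\frac{x}{m}+O_m(1)$, and summing this over $m$ gives
\[
\sum_{m\le x}\sum_{\substack{n\le x\\ \mathcal{R}(m)=n}}1\ \ge\ x^2\sum_{m\le x}\frac{1}{m^2}-x\sum_{m\le x}\frac{1}{m}+\sum_{m\le x}O_m(1).
\]
The first sum is governed by Lemma \ref{lem 1}, since $\sum_{m\le x}m^{-2}=\frac{\pi^2}{6}+O(1/x)$, and the second by Lemma \ref{lem 2}, which gives $\sum_{m\le x}m^{-1}=\log x+\gamma+O(1/x)$.

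The step I expect to be the main obstacle is controlling the accumulated error $\sum_{m\le x}O_m(1)$. As written, the error term in Theorem \ref{inf} is only claimed to be bounded for each fixed $m$; its proof sums on the order of $m^2$ floor functions, so a careless aggregate could be as large as $\sum_{m\le x}m^2\asymp x^3$ and would destroy the conclusion. To avoid this I would revisit that proof and retain only a single admissible representation per modulus, say $a_0+b_0=m$ with $r_0=m-1<m$; for this one triple one has $\lfloor (x-a_0)(x-b_0)/(m r_0)\rfloor\ge \frac{x^2-xm}{m^2}-1$, where the $-1$ is a genuine, $m$–uniform error. With a uniform $O(1)$ in place of $O_m(1)$, the aggregate error is $\sum_{m\le x}O(1)=O(x)$, which is harmless.

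Finally I would substitute the two lemmas and collect terms. Lemma \ref{lem 2} contributes the term $-x\log x+O(x)$ at the scale of the claimed bound, while Lemma \ref{lem 1} contributes a positive main term of order $x^2$; absorbing the constant $\gamma x$ and the tails $O(1/x)$ into $O(x)$, the right–hand side becomes $\frac{\pi^2}{6}x^2-x\log x+O(x)$. Since the quadratic term dominates for large $x$, this quantity exceeds $\frac{x\log x}{2}+O(x)$, which is the desired inequality. The only routine checks remaining are that the lower bound $\frac{x^2-xm}{m^2}$ stays nonnegative throughout the range (it does, since $x^2\ge xm$ for $m\le x$) and that no positive contribution is discarded when passing to the stated bound.
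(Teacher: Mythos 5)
There is a genuine gap, and it is precisely the point where your route departs from the paper's. You invoke the per-modulus lower bound $\#\{n\le x:\mathcal{R}(m)=n\}\ge \frac{x^2-xm}{m^2}+O_m(1)$ of Theorem \ref{inf} for \emph{every} $m\le x$, but that bound is unusable for small moduli: the inner count is trivially at most $x$, and $\frac{x^2-xm}{m^2}\le x$ forces $x\le m^2+m$, so the bound can only be meaningful for $m\gtrsim\sqrt{x}$. For $m=2$ it would assert that at least about $x^2/4$ integers $n\le x$ ramify, which is absurd and also contradicts the upper bound of Theorem \ref{sup}. The paper itself flags this: the discussion immediately after Theorem \ref{inf} derives the threshold $m\ge\lfloor x/\sqrt{x-\log x}\rfloor+1$, and the paper's proof of the present theorem accordingly restricts the outer sum to $\frac{x}{\sqrt{x-\log x}}<m\le x$ before summing. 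The visible symptom in your write-up is that your aggregate conclusion $\frac{\pi^2}{6}x^2-x\log x+O(x)$ exceeds the trivial cap $\sum_{m\le x}\sum_{n\le x}1\le \lfloor x\rfloor^2$ for large $x$ (since $\pi^2/6>1$), so your chain of inequalities contains a false link; that the target bound $\frac{x\log x}{2}+O(x)$ is far weaker does not repair an unsound intermediate step.

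The repair is to run your computation over the restricted range $\frac{x}{\sqrt{x-\log x}}<m\le x$, as the paper does. Writing $A=x/\sqrt{x-\log x}$, one has $x^2\sum_{A<m\le x}m^{-2}=x\sqrt{x-\log x}+O(x)$, a nonnegative contribution of order $x^{3/2}$ that may be kept or discarded in a lower bound, while Lemma \ref{lem 2} gives $-x\sum_{A<m\le x}m^{-1}=-x\log\sqrt{x-\log x}+O(\sqrt{x})=-\frac{x\log x}{2}+O(x)$; the difference is still $\ge\frac{x\log x}{2}+O(x)$ because the $x^{3/2}$ term dominates. (Note that Lemma \ref{lem 1} then plays only the role of bounding a tail, not of producing a $\frac{\pi^2}{6}x^2$ main term.) Your observation that the accumulated error $\sum_{m\le x}O_m(1)$ needs to be made uniform is a legitimate and worthwhile point --- the paper silently replaces it by $O(x)$ --- and your single-representation device with $r_0=m-1$ is a sensible way to get a uniform $O(1)$ per modulus; but it does not rescue the unrestricted range of summation.
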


\begin{proof}
We observe that by an application of Lemma \ref{lem 1}, Lemma \ref{lem 2} and Theorem \ref{inf}
\begin{align}
\sum \limits_{\frac{x}{\sqrt{x-\log x}}<m\leq x}\sum \limits_{\substack{n\leq x\\\mathcal{R}(m)=n}}1&\geq \sum \limits_{\frac{x}{\sqrt{x-\log x}}<m\leq x}\frac{x^2-xm}{m^2}+O(x) \nonumber \\& =x^2\sum \limits_{\frac{x}{\sqrt{x-\log x}}<m\leq x}\frac{1}{m^2}-x\sum \limits_{\frac{x}{\sqrt{x-\log x}}<m\leq x}\frac{1}{m}+O(x)\nonumber \\&=x^2\bigg(\sum \limits_{m>\frac{x}{\sqrt{x-\log x}}}\frac{1}{m^2}-\sum \limits_{m>x}\frac{1}{m^2}\bigg)-x\sum \limits_{\frac{x}{\sqrt{x-\log x}}<m\leq x}\frac{1}{m}+O(x)\nonumber \\&=O(x)+O(1)+x\log (\sqrt{x-\log x})+O(\sqrt{x})\nonumber \\&=x\log (\sqrt{x-\log x})+O(x)\nonumber \\&=\frac{x\log x}{2}+O(x).\nonumber
\end{align}
\end{proof}

\begin{corollary}
There exist at least one integer $n\leq x$ that ramifies in at least two modulus $m\leq x$.
\end{corollary}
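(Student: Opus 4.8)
The plan is to deduce this from the preceding theorem by a straightforward pigeonhole (double-counting) argument. The key observation is that the double sum appearing in that theorem is exactly a count of incidence pairs $(m,n)$ with $m\leq x$, $n\leq x$, and $n$ a ramifier modulo $m$. Reversing the order of summation, this count equals $\sum_{n\leq x}\rho(n)$, where $\rho(n)$ denotes the number of moduli $m\leq x$ in which the integer $n$ ramifies. The corollary is precisely the assertion that $\rho(n)\geq 2$ for at least one $n\leq x$, so the whole matter reduces to comparing the size of $\sum_{n\leq x}\rho(n)$ against the number of available integers $n\leq x$.

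First I would set up the argument by contradiction: suppose that every integer $n\leq x$ ramifies in at most one modulus $m\leq x$, i.e. $\rho(n)\leq 1$ for all $n\leq x$. Then I would bound the incidence count trivially by
\begin{align}
\sum_{m\leq x}\sum_{\substack{n\leq x\\\mathcal{R}(m)=n}}1=\sum_{n\leq x}\rho(n)\leq\sum_{n\leq x}1\leq x.\nonumber
\end{align}
Next I would invoke the previous theorem, which supplies the lower bound $\sum_{m\leq x}\sum_{n\leq x,\,\mathcal{R}(m)=n}1\geq \frac{x\log x}{2}+O(x)$. Chaining the two estimates yields $\frac{x\log x}{2}+O(x)\leq x$, which forces $\frac{\log x}{2}\leq 1+O(1)$, a bounded quantity. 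Since $\frac{\log x}{2}\to\infty$, this is false for all sufficiently large $x$, giving the desired contradiction and hence the existence of an $n\leq x$ with $\rho(n)\geq 2$.

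The only genuine subtlety I anticipate is bookkeeping with the $O(x)$ error term: I need to make the contradiction effective by pinning down an explicit threshold $x_0$ beyond which $\frac{x\log x}{2}$ strictly exceeds the trivial bound $x$ even after absorbing the error. Concretely, writing the lower bound as $\frac{x\log x}{2}-Cx$ for some absolute constant $C$ coming from the $O(x)$, the inequality $\frac{x\log x}{2}-Cx>x$ holds whenever $\log x>2(C+1)$, so any $x>e^{2(C+1)}$ suffices. I would therefore state the corollary as holding for all sufficiently large $x$, and identify such an explicit $x_0$ to make the conclusion rigorous rather than merely asymptotic. Aside from this constant-tracking, no further machinery is needed, since the averaging step and the reversal of summation order are purely formal.
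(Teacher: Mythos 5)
Your argument is exactly the pigeonhole/double-counting deduction the paper intends: the paper's entire proof is the single line ``the result follows from the pigeon-hole principle,'' and your reversal of the order of summation together with the comparison of $\frac{x\log x}{2}+O(x)$ against the trivial bound $x$ is the correct way to flesh that out. Your additional care in tracking the $O(x)$ constant and noting that the conclusion only holds for sufficiently large $x$ is a genuine improvement in rigor over the paper's statement, which omits that qualification.
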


\begin{proof}
The result follows from the pigeon-hole principle.
\end{proof}
\bigskip

\section{The index of ramification}
In this section, we launch the notion of the \emph{index} of ramification. We expose some relationship between ramifiers and their corresponding indices.

\begin{definition}
Let $n\geq 2$ be a positive integers that ramifies in modulo $m\geq 2$. By the \emph{index} of ramification in modulo $m$, denoted $\mathrm{ind}_{m}(n)$, we mean the value $r_j<m$ so that for $n\equiv a_i\pmod m$, then $n\equiv s_j\pmod {r_j}$ such that $a_i+s_j=m$.
\end{definition}

\begin{theorem}\label{magnification}
Let $n \equiv a_i \pmod m$ and suppose that $(n-m,a_i)=1$. If $\mathcal{R}(m)=n$, then $\mathrm{ind}_{m}(n)\equiv 0\pmod {a_i}$ or $(\mathrm{ind}_{m}(n), a_i)=1$.
\end{theorem}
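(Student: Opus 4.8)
The plan is to reduce everything to a single divisibility relation and then run a short gcd argument. First I would unpack the hypothesis $\mathcal{R}(m)=n$ together with the definition of the index. Writing $d:=\mathrm{ind}_m(n)$, the defining property of the index gives a residue $s_j$ with $n\equiv s_j\pmod d$ and $a_i+s_j=m$, so that $s_j=m-a_i$. Translating the congruence $n\equiv s_j\pmod d$ into a divisibility statement yields $d\mid\big(n-(m-a_i)\big)$, i.e. $d\mid (n-m+a_i)$. This is the only structural fact about the index that the argument will need.

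Next I would introduce $g:=(\mathrm{ind}_m(n),a_i)=(d,a_i)$ and chase divisibilities. Since $g\mid d$ and $d\mid (n-m+a_i)$, we get $g\mid (n-m+a_i)$; since also $g\mid a_i$, subtracting gives $g\mid\big((n-m+a_i)-a_i\big)=n-m$. Hence $g$ is a common divisor of $n-m$ and $a_i$, so $g\mid (n-m,a_i)$. Finally I would invoke the hypothesis $(n-m,a_i)=1$, which forces $g=1$, i.e. $(\mathrm{ind}_m(n),a_i)=1$. This is precisely the second of the two stated alternatives, so the disjunction in the theorem holds and the proof is complete.

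The argument is essentially routine; the only real care needed is in the first step, namely correctly reading off $s_j=m-a_i$ from the definition of the index and pairing it with the congruence modulo $d$ to extract the divisibility $d\mid(n-m+a_i)$. Everything after that is a standard common-divisor cancellation. It is worth remarking that under the stated coprimality hypothesis the branch $(\mathrm{ind}_m(n),a_i)=1$ always occurs, so the alternative $\mathrm{ind}_m(n)\equiv 0\pmod{a_i}$ is, strictly speaking, never needed; the inclusive ``or'' in the statement is nonetheless satisfied.
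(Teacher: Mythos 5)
Your proof is correct and follows essentially the same divisibility chase as the paper's: both read off $s_j=m-a_i$ from the definition of the index, deduce that any common divisor of $\mathrm{ind}_m(n)$ and $a_i$ divides $n-m$, and then invoke the hypothesis $(n-m,a_i)=1$. The only difference is cosmetic — the paper argues by contradiction under the assumption $1<(\mathrm{ind}_m(n),a_i)<a_i$, whereas you argue directly and correctly note that the gcd is always $1$, so the first disjunct of the conclusion is never actually needed.
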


\begin{proof}
Let $n \equiv a_i \pmod m$ with $(n-m,a_i)=1$ and suppose for the sake of contradiction that $(\mathrm{ind}_m(n), a_i)=d$ with $1<d<a_i$. It follows that $\bigg(\frac{\mathrm{ind}_m(n)}{d}, \frac{a_i}{d}\bigg)=1$. Since $\mathcal{R}(m)=n$, it follows that there exist some $r_k<m$ such that for $n\equiv s_k\pmod {r_k}$, then $a_i+s_k=m$. It follows that $d|(m-s_k)$. Since $d|\mathrm{ind}_{m}(n)$, it follows that $d|(n-s_k)$. Thus, it follows that $d|(n-m)$. This contradicts the assumption $(n-m,a_i)=1$, since $d|a_i$ and $1<d<a_i$.
\end{proof}

\begin{remark}
Theorem \ref{magnification}, roughly speaking, suggests that the image of a body in a mirror of somewhat large size could be magnified to cover the size of a certain smaller mirror.
\end{remark}

\section{The circle of ramification}

In this section, we launch the notion of the circle of ramification in a given modulus. We formally launch the following language:

\begin{definition}
Let 
$$
\mathcal{I}:=\{n\leq x:\mathcal{R}(m)=n\}
$$ 
be any set of ramifiers. By the \emph{circle of ramification} relative to $\mathcal{I}$ with center $m$ and radius $r$, we mean  $|\mathcal{R}(m)-m|\leq r$, where $r=\mathrm{max}\{|\mathcal{R}(m)-m|\}$.
\end{definition}

\begin{remark}
The next result suggests that for any finite set of the integers, we can get control on the radius of the circle of ramification. In other words, there appears to be lack of degree of freedom in constructing circles of ramification, given any finite set of integers.
\end{remark}

\begin{proposition}\label{circ}
Let 
$$
\mathcal{I}:=\{n\leq x:\mathcal{R}(m)=n\}
$$
be any set of ramifiers, then 
\begin{align}
\mathrm{max}\{|\mathcal{R}(m)-m|\}& \leq \frac{x(\sqrt{x-\log x}-1)}{\sqrt{x-\log x}}.\nonumber
\end{align}
\end{proposition}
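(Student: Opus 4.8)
The plan is to combine two facts that are already on the table: every element of $\mathcal{I}$ is an integer $n\le x$, and every modulus that admits a ramifier drawn from the finite set $\{n\le x\}$ is bounded below. First I would recall the lower threshold on the modulus derived immediately after Theorem \ref{inf}: any $m$ admitting a ramifier in $\{n\le x\}$ satisfies
\begin{align}
m\ge \left\lfloor \frac{x}{\sqrt{x-\log x}}\right\rfloor +1 > \frac{x}{\sqrt{x-\log x}}.\nonumber
\end{align}
Since the center of the circle of ramification relative to $\mathcal{I}$ is exactly such an $m$, this inequality is available for free.

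Next I would estimate the radius $r=\max\{|\mathcal{R}(m)-m|\}$ directly. Writing $\mathcal{R}(m)=n$, each such $n$ obeys $n\le x$, so $\mathcal{R}(m)-m\le x-m$. Substituting the lower bound on $m$ gives
\begin{align}
|\mathcal{R}(m)-m|\le x-m < x-\frac{x}{\sqrt{x-\log x}},\nonumber
\end{align}
and a purely algebraic simplification,
\begin{align}
x-\frac{x}{\sqrt{x-\log x}}=\frac{x\left(\sqrt{x-\log x}-1\right)}{\sqrt{x-\log x}},\nonumber
\end{align}
yields the stated form once the maximum over all $n\in\mathcal{I}$ is taken.

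The main obstacle is the passage from the one-sided estimate $\mathcal{R}(m)-m\le x-m$ to the bound on the genuine absolute value $|\mathcal{R}(m)-m|$. When $\mathcal{R}(m)\ge m$ the estimate is immediate; the delicate case is $\mathcal{R}(m)<m$, where one must instead control $m-\mathcal{R}(m)$. I would handle this by invoking Proposition \ref{ram}, which forbids $\mathcal{R}(m)\equiv 0\pmod m$ and so prevents the ramifier from sitting at an arbitrarily small residue that would push $m-\mathcal{R}(m)$ past the threshold, together with the structural constraint $a_1+a_2=m$ with $a_2<r<m$ coming from Definition \ref{concept}, which confines the ramifier to the prescribed window around $m$. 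Making this window argument fully rigorous, rather than leaning on the crude bound $n\le x$ alone, is the only genuinely subtle point; everything else is the bookkeeping above.
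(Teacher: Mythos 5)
Your main computation is precisely what the paper intends: its entire proof is the one-line citation of Theorem \ref{inf} together with the threshold $m\geq\lfloor x/\sqrt{x-\log x}\rfloor+1$ from the discussion following it, and your algebraic simplification of $x-x/\sqrt{x-\log x}$ into $x(\sqrt{x-\log x}-1)/\sqrt{x-\log x}$ is exactly the reconstruction of that one-liner. On the case $\mathcal{R}(m)\geq m$ you and the paper coincide, and your bookkeeping there is fine.

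The problem is the ``delicate case'' $\mathcal{R}(m)<m$, which you correctly identify but do not close, and which the paper silently ignores. Your proposed repair does not work: Proposition \ref{ram} only excludes $\mathcal{R}(m)\equiv 0\pmod m$ and gives no quantitative lower bound on the ramifier, so it is compatible with $\mathcal{R}(m)=n$ for $n$ as small as $2$ while $m$ is as large as $x$ (the threshold is only a \emph{lower} bound on $m$, of order $\sqrt{x}$; nothing caps $m$ from above). In that situation $|\mathcal{R}(m)-m|=m-n$ can be as large as $x-2$, which exceeds the claimed bound $x-x/\sqrt{x-\log x}\approx x-\sqrt{x}$ for all large $x$. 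Nor does Definition \ref{concept} supply the ``window'' you appeal to: the constraint $a_1+a_2=m$ with $a_2<r<m$ only forces $a_1=n\bmod m>m-r$, which says nothing about how far $n$ itself sits below $m$. Ramifiers well below their modulus do exist (e.g.\ $n=5$ ramifies in modulus $7$, since $5\equiv 5\pmod 7$, $5\equiv 2\pmod 3$, and $5+2=7$), so the case cannot be waved away. To make the proposition true as stated one would need either to restrict the maximum to ramifiers with $\mathcal{R}(m)\geq m$ or to establish an upper bound on $m$ comparable to the lower threshold; as written, neither your argument nor the paper's supplies the missing half of the absolute value.
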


\begin{proof}
The result follows by applying Theorem \ref{inf} and the previous discussion on the least scale of modulus that admits a ramifier.
\end{proof}

\begin{remark}
Proposition \ref{circ} suggests that the ramifiers in any finite set must not be too far way from the centre of ramification, in the sense that they must be closer to the centre than expected with distance $\leq x^{1-\epsilon}$ for some $\epsilon>0$.
\end{remark}

\section{Ramification character}

It is important to note that in a given modulus not every integer is a ramifier. In other words, there are some numbers that ramify and some that do not ramify in a given modulus. A sequel to this paper will be focused on developing a criterion to decide which number is a ramifier for any given modulus. In this section, however, we launch the ramification character and establish some elementary properties in this regard.

\begin{definition}(Ramification character)
Let $n$ be any positive integer. Then we set 
\begin{align}
\kappa_{m}(n):=
\begin{cases}1 \quad \text{if}\quad \mathcal{R}(m)=n\\0 \quad \text{otherwise.}
\end{cases}\nonumber
\end{align}
\end{definition}

\begin{remark}
For the remaining part of the paper, we will study some interesting properties of the ramification character in a given modulus.
\end{remark}

\begin{proposition}
Let $m$ be a fixed positive integer. The following properties of the ramification character holds: 
\begin{enumerate}

\item [(i)] $\kappa_{m}(n+2m)=\kappa_{m}(n)$.
\bigskip

\item [(ii)] $\kappa_{m}(n+m!)=\kappa_{m}(n)$.
\bigskip

\item [(iii)] $\kappa_{m}(1)=0$.
\bigskip

\item [(iv)] $\kappa_{m}(n)=0$ for $n\equiv 0,1\pmod m$.
\bigskip

\item [(v)] $\kappa_{m}(nm!)=\kappa_{m}(n)\kappa_{m}(m!)$.
\end{enumerate}
\end{proposition}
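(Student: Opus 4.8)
The plan is to reduce every assertion to a single observation: writing $a_1 = a_1(n)$ for the least nonnegative residue of $n$ modulo $m$, the integer $n$ is a ramifier in $\pmod m$ precisely when there exists some $r$ with $1 < r < m$ and $n \equiv m - a_1 \pmod r$. Thus $\kappa_m(n)$ is completely determined by the residues of $n$ modulo $r$ for $2 \le r \le m$, and in particular by the single residue of $n$ modulo $\mathrm{lcm}(1,2,\dots,m)$. Most of the listed identities then follow from this characterization together with the residue obstruction already exploited in Proposition \ref{ram} and Theorem \ref{quadratic}.

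First I would dispose of (iii) and (iv). If $a_1 \in \{0,1\}$ then the residue $m - a_1 \in \{m-1, m\}$ required of the witness cannot be attained, since $n \bmod r \le r-1 \le m-2$ for every admissible $r < m$; this forces $\kappa_m(n) = 0$, and (iii) is the special case $n = 1$ (which is in any event barred by the standing hypothesis $n \ge 2$). For (v), I would note that $m \mid m!$, so both $m!$ and $nm!$ are $\equiv 0 \pmod m$; applying (iv) gives $\kappa_m(m!) = 0$ and $\kappa_m(nm!) = 0$, whence $\kappa_m(nm!) = 0 = \kappa_m(n)\kappa_m(m!)$ irrespective of the value of $\kappa_m(n)$. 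For (ii), the key point is that $r \mid m!$ for every $1 \le r \le m$, so adding $m!$ leaves $n \bmod r$ unchanged for each such $r$; the characterization then shows that $n$ and $n + m!$ ramify through exactly the same witnesses $r$, giving $\kappa_m(n+m!) = \kappa_m(n)$.

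The main obstacle is (i), and I would expect it to be the genuinely delicate point. The natural attempt mirrors (ii): adding $2m$ preserves $a_1 = n \bmod m$, and one hopes it preserves the ramification status. But the reduction now requires $n \bmod r$ to be unchanged for the relevant witness $r$, i.e.\ $r \mid 2m$, and this holds for every $1 < r < m$ only in special cases; it already fails for $r = 3$, $m = 5$, where $3 \nmid 10$. A direct check at $m = 5$ bears this out: $n = 4$ ramifies via $r = 3$ (as $4 \equiv 1 = 5 - 4 \pmod 3$), whereas $n + 2m = 14$ does not, so $\kappa_5(14) = 0 \ne 1 = \kappa_5(4)$. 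I therefore anticipate that (i) as stated cannot be proved by this route, and the honest repair is to replace the period $2m$ by $\mathrm{lcm}(1,2,\dots,m)$ (a divisor of $m!$), which is exactly the content of (ii); alternatively one restricts (i) to those moduli $m$ for which every $1 < r < m$ divides $2m$.
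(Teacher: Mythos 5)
Your handling of (ii)--(v) is correct and is in fact more complete than the paper's own proof. The paper disposes of (ii) by exactly your observation (every $r\le m$ divides $m!$, so all relevant residues are unchanged), defers (iii) and (iv) to Proposition \ref{ram} and Theorem \ref{quadratic} without spelling out the size argument you supply (a witness residue $a_2=m-a_1\in\{m-1,m\}$ is unattainable because $n\bmod r\le r-1\le m-2$ for every $1<r<m$), and waves off (v) as ``easy to establish''; your reduction of (v) to (iv) via $m\mid m!$, forcing both sides to vanish, is the right way to make that last claim precise.

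The substantive point is (i). The paper's proof opens with ``We prove only (ii), (iii), (iv) and (v)'' and never returns to (i), and your analysis shows why no proof could have been given: with $m=5$ and $n=4$ one has $4\equiv 1\equiv 5-4\pmod 3$, so $\kappa_5(4)=1$, whereas $14\bmod 2=0$, $14\bmod 3=2$, $14\bmod 4=2$, none equal to $1$, so $\kappa_5(14)=0$. Hence (i) is false as stated; the defect is precisely that the witness modulus $r$ need not divide $2m$, and your proposed repair --- replacing the period $2m$ by $\operatorname{lcm}(1,\dots,m)$, of which (ii) is an instance --- is the correct one. The gap here lies in the statement and in the paper's omission, not in your argument.
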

\bigskip

\begin{proof}
We prove only $(ii)$, $(iii)$, $(iv)$ and $(v)$. For $(ii)$, since $n+m!\equiv n\pmod {r_i}$ for any sequence $r_0<r_1<\ldots r_{k-1}<r_{k}=m$ the result follows immediately according as $n$ is a ramifier or a non-ramifier. Clearly $(iii)$ and $(iv)$ follow from Proposition \ref{ram} and Proposition \ref{quadratic}. Finally $(iv)$ is also easy to establish.
\end{proof}
\bigskip

A natural quest is to seek various upper and lower bounds for the partial sums of the ramification character in a fixed modulus. That is, we seek estimates for sums of the form 

\begin{align}
\sum \limits_{n\leq x}\kappa_{m}(n).\nonumber
\end{align}
It is easy to check trivial upper and lower bounds for this sum have been established in Theorem \ref{sup} and Theorem \ref{inf}, by observing that 
\begin{align}
\sum \limits_{\substack{n\leq x\\\mathcal{R}(m)=n}}1&=\sum \limits_{n\leq x}\kappa_{m}(n).\nonumber
\end{align} 
We obtain the following weaker estimate for the partial sums of the ramification character as follows:

\begin{theorem}
Let $m$ be a fixed positive integer, then the inequality \begin{align}
\frac{x^2-xm}{m^2}+O_m(1)& \leq \sum \limits_{n\leq x}\kappa_m(n) \leq \bigg(1-\frac{1}{m}\bigg)x-\frac{\log x}{\log m}+O(1)\nonumber
\end{align}
hold for all 
\begin{align}
m\geq \left \lfloor \frac{x}{\sqrt{x-\log x}}\right \rfloor +1.\nonumber
\end{align}
\end{theorem}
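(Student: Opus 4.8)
The plan is to recognize that this two-sided bound is not a fresh estimate at all, but a repackaging of the two counting bounds already established, once the partial sum of the ramification character is identified with the ramifier counting function. First I would record the elementary identity
\begin{align}
\sum_{n\leq x}\kappa_m(n)=\sum_{\substack{n\leq x\\\mathcal{R}(m)=n}}1,\nonumber
\end{align}
which is immediate from the definition of $\kappa_m$, since $\kappa_m(n)$ contributes $1$ precisely when $n$ is a ramifier in modulo $m$ and $0$ otherwise. This reduces the entire problem to sandwiching $\#\{n\leq x:\mathcal{R}(m)=n\}$ between the two known bounds.

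With this identity in hand, the upper bound is supplied directly by Theorem \ref{sup}, which gives $\sum_{n\leq x}\kappa_m(n)\leq (1-\tfrac1m)x-\tfrac{\log x}{\log m}+O(1)$, and the lower bound is supplied directly by Theorem \ref{inf}, which yields $\sum_{n\leq x}\kappa_m(n)\geq \tfrac{x^2-xm}{m^2}+O_m(1)$. Concatenating these two inequalities produces the desired chain, so no further estimation of the character is required; the work of the proof is entirely inherited from the earlier counting arguments.

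The one point that genuinely needs justification, and which I expect to be the crux, is the side condition $m\geq \lfloor x/\sqrt{x-\log x}\rfloor +1$. This is exactly the range in which the lower estimate does not overtake the upper estimate, so that the sandwich is coherent rather than vacuous. I would justify it by reproducing the threshold computation from the discussion following Theorem \ref{inf}: comparing the main terms $\tfrac{x^2-xm}{m^2}$ and $(1-\tfrac1m)x-\tfrac{\log x}{\log m}$ and solving for when the former ceases to majorize the latter returns precisely $m<x/\sqrt{x-\log x}$ for the bad regime, whence the stated floor-plus-one cutoff delineates the admissible moduli. The delicate part is that near the boundary $m\asymp \sqrt{x}$ both main terms are of size $\asymp x$ and their difference is comparatively small, so one must confirm that the implied $O_m(1)$ and $O(1)$ terms remain negligible against this difference throughout the admissible range; away from the exact threshold the gap grows with $m$ and this is automatic, so the only real care is needed at the very edge of the range.
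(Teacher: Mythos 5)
Your proposal matches the paper's proof, which likewise obtains the result by identifying $\sum_{n\leq x}\kappa_m(n)$ with the ramifier counting function and combining the upper bound of Theorem \ref{sup} with the lower bound of Theorem \ref{inf}. Your additional attention to the threshold condition on $m$ is a reasonable elaboration of the discussion following Theorem \ref{inf}, but the core argument is the same as the paper's.
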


\begin{proof}
The result follows by combining Theorem \ref{inf} and Theorem \ref{sup}.
\end{proof}

\section{Final remarks}
In this paper, we have introduced the concept of the ramifiers. We have established some properties and some consequences of this theory. The binary Goldbach conjecture, which is an important open problem, can be framed in this language as:

\begin{conjecture}[Goldbach]
Every even number $n\geq 6$ admits a strong ramifier in $\pmod n$.
\end{conjecture}

\bibliographystyle{amsplain}

\begin{thebibliography}{10}

\bibitem {tenenbaum2015introduction} G{\'e}rald Tenenbaum, \textit{Introduction to analytic and probabilistic number theory}, vol. 163, American Mathematical Soc., 2015.\\

\bibitem {hildebrand2005introduction} A.J Hildebrand,  \textit{Introduction to Analytic Number Theory Lecture Notes}, Department of Mathematics, University of Illinois, 2005.\\


\bibitem {hardy1923some} J.H Hardy and J.E Littlewood, \textit{Some problems of ‘Partitio numerorum’; III: On the expression of a number as a sum of primes}, Acta mathematica, vol. 44:1, Springer, 1923, 1--70.\\

\bibitem {inogradov1947method} I.M Vinogradov, \textit{The method of trigonometrical sums in the theory of numbers}, Trudy Matematicheskogo Instituta imeni VA Steklova, vol. 23, Russian Academy of Sciences, Steklov Mathematical Institute of Russian~…, 1947, 1947.\\

\bibitem {chen2002representation} J.R Chen, \textit{On the representation of a larger even integer as the sum of a prime and the product of at most two primes}, The Goldbach Conjecture, World Scientific, 2002, 275--294.\\

\end{thebibliography}

\end{document}